\newtheorem{thm}{Theorem}[section] 
\newtheorem{cor}[thm]{Corollary}
\newtheorem{exmpl}[thm]{Example}
\newtheorem{lem}[thm]{Lemma}
\newtheorem{prop}[thm]{Proposition}
\newtheorem{ques}[thm]{Question}
\theoremstyle{definition}
\newtheorem{rem}[thm]{Remark}
\newcommand\operA[2]{{\if!#2!\operatorname{#1}\else{\operatorname{#1}_{#2}^{\phantom{I}}}\fi}} 
\newcommand\Cref[1]{{Corollary~\ref{#1}}}%
\def\tr{{\operatorname{Tr}}}
\def\dim{{\operatorname{dim}}}
\def\norm{{\operatorname{N}}}
\newcommand{\Trace}[1][]{\if!#1!\operatorname{Tr}\else{\operatorname{Tr}_{#1}^{\phantom{I}}}\fi} 
\long\def\forget#1\forgotten{{}} %
\def\({\left(}
\def\){\right)}
\newif\iffurther
\newif\ifXY 
\journal{Algebra Colloquium}
\begin{document}

\begin{frontmatter}

\title{Spaces with Vanishing Characteristic Coefficients}

\author{Adam Chapman}
\ead{adam1chapman@yahoo.com}
\address{School of Computer Science, Academic College of Tel-Aviv-Yaffo, Rabenu Yeruham St., P.O.B 8401 Yaffo, 6818211, Israel}

\begin{abstract}
We prove that the maximal dimension of a subspace $V$ of the generic tensor product of $m$ symbol algebras of prime degree $p$ with $\tr(v^{p-1})=0$ for all $v\in V$ is $\frac{p^{2m}-1}{p-1}$. The same upper bound is thus obtained for $V$ with $\tr(v)=\tr(v^2)=\dots=\tr(v^{p-1})=0$ for all $v \in V$.
We make use of the fact that for any subset $S$ of $\underbrace{\mathbb{F}_p \times \dots \times \mathbb{F}_p}_{n \ \text{times}}$ of $|S| > \frac{p^{n}-1}{p-1}$, for all $u\in V$ there exist $v,w\in S$ and $k\in [\![0,p-1]\!]$ such that $kv+(p-1-k)w=u$. 
\end{abstract}

\begin{keyword}
Central Simple Algebras, Symbol Algebras, Generic Algebras, Characteristic Polynomial, Characteristic Coefficients, Zero Sum Sequences, Valuations on Division Algebras, Fields with Positive Characteristic
\MSC[2020] primary 16K20; secondary 16W60, 11B50, 12E15
\end{keyword}

\end{frontmatter}

\section{Introduction}

Given a central simple algebra $A$ over a field $F$, there always exists an algebraic extension $K/F$ such that $A \otimes K=M_d(K)$. The number $d$ is the degree of $A$, and $K$ is a splitting field of $A$.
Under this embedding of $A$ into $M_d(K)$, the characteristic polynomial of any element in $A$ can be computed, and its coefficients always live in $F$.
Therefore, we obtain homogeneous polynomial forms $f_1,\dots,f_d : A \rightarrow F$ (``the characteristic coefficients") of degrees $1,\dots,d$ respectively, such that the characteristic polynomial of every given $v \in A$ is $$f(x)=x^d+f_1(v) x^{d-1}+\dots+f_{d-1}(v) x+f_d(v).$$
In particular, $-f_1(v)=\operatorname{Tr}(v)$ is the reduced trace form, and $(-1)^d f_d(v)=\norm(v)$ is the reduced norm form.

In this work we focus on specific central simple algebras, which are tensor products of $m$ symbol algebras of a fixed prime degree $p$ over $F$.
The symbol algebras are known to generate ${_pBr}(F)$ when $\operatorname{char}(F)=p$ (by \cite[Theorem 9.1.4]{GS}) or when $\operatorname{char}(F)\neq p$ and $F$ contains primitive $p$th roots of unity (by \cite{MS}), and thus their different tensor products represent all the classes of this group.
Our aim is to understand the structure of $F$-linear subspaces where some of the characteristic coefficients vanish, in particular the first $p-1$ coefficients, $f_1,...,f_{p-1}$.
In the case of $m=1$, these spaces (known as ``Kummer spaces" or ``$p$-central spaces") have strong connections to the Severi-Brauer variety of the algebra by \cite{Matzri:2020}, and their dimension in the generic case was bounded from above by $p+1$ in \cite{CGMRV} and \cite{ChapmanChapman:2017}.
For the special cases of $p=2$ or 3, the upper bound $p+1$ holds for Kummer spaces in all symbol algebras of degree $p$, not merely the generic ones (see \cite{MatzriVishne:2012} and \cite{MatzriVishne:2014} for $p=3$; the case of $p=2$ is classical).
Here, we prove that in the generic case (where the slots are independent transcendental elements over a smaller field), the dimension of an $F$-subspace $V$ of a tensor product of $m$ symbol algebras of degree $p$ with $\tr(v^{p-1})=0$ for all $v\in V$ is bounded from above by $\frac{p^{2m}-1}{p-1}$. Thus, the same upper bound is obtained also for a space over which $f_1,\dots,f_{p-1}$ vanish.
The proof makes use of the fact that for any subset $S$ of $\mathbb{F}_p^{\times n}=\underbrace{\mathbb{F}_p \times \dots \times \mathbb{F}_p}_{n \ \text{times}}$ of $|S|>\frac{p^n-1}{p-1}$, every $u \in V$ can be written as $kv+(p-1-k)w$ for some $v,w \in S$ and $k \in [\![0,p-1]\!]$.

The generic case is of special significance because every tensor product of symbol algebras is a specialization of the generic tensor product, and heuristically, statements that hold true in the generic case are expected to hold true in ``most" of the cases. Spaces over which certain characteristic coefficients vanish in the generic case give rise to spaces with the same property in every specialization, and thus the generic case contains the ``smallest" such spaces, and therefore it is the right place to start if one wishes to examine if a certain hypothesis on the dimension holds.

\section{Lines in $\mathbb{F}_p^{\times n}$}\label{LinesInFp}

By a line in $\mathbb{F}_p^{\times n}$ we mean an affine line, i.e., a set of the form $\{v+(w-v)t : t\in \mathbb{F}_p\}$ for some $v\in \mathbb{F}_p^{\times n}$ and $w\in \mathbb{F}_p^{\times n} \setminus \{v\}$.
\begin{lem}
Given a prime $p$, $n \in \mathbb{N}$ and a set $S \in \mathbb{F}_p^{\times n}$ of $|S|>\frac{p^n-1}{p-1}$, every point $P \in V$ lives on a line passing through two distinct points of $S$.
\end{lem}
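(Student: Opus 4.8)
The plan is to argue by contraposition on the set of lines. Fix a point $P \in \mathbb{F}_p^{\times n}$ and suppose, for contradiction, that no line through two distinct points of $S$ passes through $P$. The key object is the pencil of affine lines through $P$: each such line is determined by its ``direction'' in $\mathbb{P}^{n-1}(\mathbb{F}_p)$, so there are exactly $\frac{p^n-1}{p-1}$ lines through $P$, and they partition $\mathbb{F}_p^{\times n} \setminus \{P\}$ into blocks, each of size $p-1$ (the points of a given line other than $P$ itself). First I would make this partition explicit: for a direction $d \in \mathbb{F}_p^{\times n}\setminus\{0\}$, the line is $\ell_d = \{P + t d : t \in \mathbb{F}_p\}$, and $\ell_d = \ell_{d'}$ iff $d' \in \mathbb{F}_p^{\times} d$.

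**The pigeonhole step.** Under the contradiction hypothesis, each line $\ell_d$ contains \emph{at most one} point of $S$: if it contained two distinct points of $S$, that line — which passes through $P$ — would be exactly the forbidden configuration. Since $P \notin S$ is not automatic, I should also note that $P$ itself may or may not lie in $S$, but in either case the points of $S$ distinct from $P$ are distributed among the $\frac{p^n-1}{p-1}$ lines with at most one per line, giving $|S \setminus \{P\}| \le \frac{p^n-1}{p-1}$, hence $|S| \le \frac{p^n-1}{p-1} + 1$. This is slightly weaker than what I want, so I need to be a touch more careful: if $P \in S$, then a line through $P$ already contains the point $P \in S$, so it can contain \emph{no further} point of $S$, forcing $|S| = 1 \le \frac{p^n-1}{p-1}$ (for $n \ge 1$), contradicting $|S| > \frac{p^n-1}{p-1} \ge 1$; and if $P \notin S$, then all of $S$ lies in the union of the lines, one point per line, so $|S| \le \frac{p^n-1}{p-1}$, again contradicting the hypothesis. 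Either way we reach a contradiction, proving the lemma.

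**The main obstacle.** The only genuinely delicate point is the exact count of lines through a fixed point and the verification that they tile $\mathbb{F}_p^{\times n}\setminus\{P\}$ into equal blocks; everything else is pigeonhole. Concretely, one checks that the map $d \mapsto \ell_d$ from $(\mathbb{F}_p^{\times n}\setminus\{0\})/\mathbb{F}_p^{\times}$ to the set of lines through $P$ is a bijection, that $|\ell_d \setminus \{P\}| = p - 1$ since $t \mapsto P + td$ is injective on $\mathbb{F}_p$, and that two distinct lines through $P$ meet only in $P$. The cardinality of the index set is $\frac{p^n - 1}{p - 1}$, so the blocks account for $(p-1)\cdot\frac{p^n-1}{p-1} = p^n - 1$ points, which is exactly $|\mathbb{F}_p^{\times n} \setminus \{P\}|$, confirming the partition. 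I would also remark that the bound is sharp: taking $S$ to be a set of one representative from each line through $P$ (all different from $P$) gives $|S| = \frac{p^n-1}{p-1}$ with no line through two points of $S$ passing through $P$, so the strict inequality in the hypothesis cannot be relaxed.
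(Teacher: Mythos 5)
Your argument is correct and is essentially the paper's own proof: count the $\frac{p^n-1}{p-1}$ lines through $P$ and apply the pigeonhole principle, with your version merely spelling out the partition of $\mathbb{F}_p^{\times n}\setminus\{P\}$ and the (harmless) case $P\in S$ more explicitly. No substantive difference.
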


\begin{proof}
There are exactly $\frac{p^n-1}{p-1}$ different lines passing through any given point $P$ in $\mathbb{F}_p^{\times n}$.
By the pigeon-hole principle, since $|S| \geq \frac{p^n-1}{p-1}+1$, there exists a line passing through $P$ that contains two distinct points from $S$.
\end{proof}

\begin{prop}
Given a prime $p$, $n \in \mathbb{N}$ and a set $S \subseteq \mathbb{F}_p^{\times n}$ of $|S|>\frac{p^n-1}{p-1}$, every point $Q \in \mathbb{F}_p^{\times n}$ can be written as $kv+(p-1-k)w$ for some $v,w \in S$ and $k\in [\![0,p-1]\!]$.
\end{prop}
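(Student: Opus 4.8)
The plan is to reinterpret the set $\{\,kv+(p-1-k)w : k\in[\![0,p-1]\!]\,\}$ as an affine line and then invoke the lemma. Fix distinct $v,w\in\mathbb{F}_p^{\times n}$ and $k\in[\![0,p-1]\!]$, and set $s=k+1$ (read in $\mathbb{F}_p$). Then the coefficient of $v$ in $kv+(p-1-k)w$ is $k=s-1$ and the coefficient of $w$ is $p-1-k\equiv -s$, so
\[
kv+(p-1-k)w=(s-1)v-sw=-\bigl((1-s)v+sw\bigr).
\]
As $k$ runs through $[\![0,p-1]\!]$ the parameter $s=k+1$ runs through all of $\mathbb{F}_p$, and $\{\,(1-s)v+sw : s\in\mathbb{F}_p\,\}$ is precisely the line through $v$ and $w$ in the sense of Section~\ref{LinesInFp}. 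Consequently
\[
\{\,kv+(p-1-k)w : k\in[\![0,p-1]\!]\,\}=-\{\,(1-s)v+sw : s\in\mathbb{F}_p\,\},
\]
which is exactly the line through $-v$ and $-w$.

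With this identification the proof finishes in one move. Let $-S:=\{\,-u : u\in S\,\}$; negation being a bijection of $\mathbb{F}_p^{\times n}$, we have $|{-S}|=|S|>\frac{p^n-1}{p-1}$, so the lemma applies to $-S$: a given $Q\in\mathbb{F}_p^{\times n}$ lies on a line passing through two distinct points of $-S$, say through $-v$ and $-w$ with $v,w\in S$ and $v\neq w$. By the previous paragraph that line is $\{\,kv+(p-1-k)w : k\in[\![0,p-1]\!]\,\}$, whence $Q=kv+(p-1-k)w$ for some $k\in[\![0,p-1]\!]$, which is the assertion. (Equivalently, apply the lemma directly to $S$: then $-Q$ lies on a line $(1-t)v+tw$ through distinct points $v,w\in S$, and negating gives $Q=(t-1)v-tw=kv+(p-1-k)w$ with $k\equiv t-1$.)

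I do not anticipate a real obstacle: all of the combinatorial content is already in the lemma, and the Proposition is merely its reformulation under the sign-changing, order-reversing correspondence between the parametrization $k\mapsto kv+(p-1-k)w$ and the standard parametrization of a line. The only points requiring care are bookkeeping ones — justifying the congruences modulo $p$ in the displayed identity, and noting that the two endpoints $k=0$ and $k=p-1$ (where the expression degenerates to $(p-1)w=-w$ and $(p-1)v=-v$) are harmless because a line always contains its two defining points. One may also observe, although it is not needed for the statement, that the bound is optimal: choosing one point other than $P$ on each of the $\frac{p^n-1}{p-1}$ lines through a fixed point $P$ yields a set $S$ with $|S|=\frac{p^n-1}{p-1}$ for which $Q=-P$ admits no such representation.
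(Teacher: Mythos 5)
Your proof is correct and follows essentially the same route as the paper: the paper likewise applies the lemma to $P=-Q$, identifies the points $kv+(p-1-k)w$ with the negatives of the points on the line through $v$ and $w$, and negates back. Your uniform parametrization via $s=k+1$ is a slightly cleaner bookkeeping of the same step (the paper instead splits off the endpoint cases $k=0$ and $k=p-1$ by hand), and your optimality remark, while not needed, is also sound.
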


\begin{proof}
Take $P=-Q$ in the previous lemma.
And then there exist $v,w\in S$ such that $-Q$ lives in the line passing through $v$ and $w$.
Every point on this line is either $v$, or $w$ or $kv+(p+1-k)w$ for some $k \in [\![2,p-1]\!]$.
Therefore, $Q$ is either $-v=(p-1)v$, $-w=(p-1)w$, or $(p-k)v+(k-1)w$ for some $k \in [\![2,p-1]\!]$.
\end{proof}

\section{Symbol Algebras}

Fix a prime number $p$. 
A symbol algebra of degree $p$ over a field $F$ is defined in the following two cases:
\begin{enumerate}
\item $\operatorname{char}(F)=p$, and
\item $\operatorname{char}(F) \neq p$ and $F$ contains a primitive $p$th root of unity $\rho$.
\end{enumerate}
In both cases, they coincide with cyclic algebras, but general cyclic algebras of degree $p$ exist also in cases where $\operatorname{char}(F) \neq p$ and $F$ contains no primitive $p$th root of unity.

When $\operatorname{char}(F)=p$, a symbol algebra takes the form
$A=[\alpha,\beta)_{p,F}=F \langle x,y : x^p-x=\alpha, y^p=\beta, y x y^{-1}=x+1 \rangle$ for some $\alpha \in F$ and $\beta \in F^\times$. In this case, $$\operatorname{Tr}(x^i y^j)=\begin{cases} -1 & (i,j)=(p-1,0)\\ 0 & (i,j) \in [\![0,p-1]\!]^{\times 2} \setminus \{(p-1,0)\}.
\end{cases}$$

When $\operatorname{char}(F)\neq p$ and $F$ contains a primitive $p$th root of unity $\rho$, a symbol algebra takes the form
$A=(\alpha,\beta)_{p,F}=F \langle x,y : x^p=\alpha, y^p=\beta, y x y^{-1}=\rho x \rangle$ for some $\alpha,\beta \in F^\times$. In this case, $\operatorname{Tr}(x^i y^j)=0 \forall(i,j) \in [\![0,p-1]\!]^{\times 2} \setminus \{\vec{0}\}$, and $\operatorname{Tr}(1)=p$.

Note that for any two central simple algebras $A$ and $B$ over $F$, in $A\otimes B$ we have $\operatorname{Tr}(a \cdot b)=\operatorname{Tr}(a) \cdot \operatorname{Tr}(b)$ for $a\in A$ and $b\in B$. Therefore, for $(d_1,e_1,\dots,d_m,e_m) \in [\![0,p-1]\!]^{\times 2m}$, the trace form on the tensor product $[\alpha_1,\beta_1)_{p,F} \otimes \dots \otimes [\alpha_m,\beta_m)_{p,F}$ with generators $x_1,y_1,\dots,x_m,y_m$ satisfies $\operatorname{Tr}(x_1^{d_1} y_1^{e_1} \dots x_m^{d_m} y_m^{e_m})=(-1)^m$ if and only if $d_1=\dots=d_m=p-1$ and $e_1=\dots=e_m=0$, and $\operatorname{Tr}(x_1^{d_1} y_1^{e_1} \dots x_m^{d_m} y_m^{e_m})=0$ otherwise.
In the tensor product $(\alpha_1,\beta_1)_{p,F}\otimes \dots \otimes (\alpha_m,\beta_m)_{p,F}$ with generators $x_1,y_1,\dots,x_m,y_m$, the trace form satisfies $\operatorname{Tr}(1)=p^m$ and $\operatorname{Tr}(x_1^{d_1}y_1^{e_1}\dots x_m^{d_m} y_m^{e_m})=0$ whenever $(d_1,e_1,\dots,d_m,e_m) \in [\![0,p-1]\!]^{\times 2m} \setminus \{ \vec{0}\}$.

\section{Trace conditions for vanishing characteristic coefficients}

Let $v_1,...,v_k$ be elements in some non-commutative algebra and $d_1,...,d_k$ be non-negative integers. The notation $v_1^{d_1} * \dots * v_{k}^{d_{k}}$ stands for the sum of all the different products of $d_1$ copies of $v_1$, $d_2$ copies of $v_2$ and so on (see \cite{Revoy:1977}). For example, $v_1^2 * v_2=v_1^2 v_2+v_1 v_2 v_1+v_2 v_1^2$.

\begin{prop}
Let $p$ be a prime number, $F$ a field of $\operatorname{char}(F)=0$ or $\geq p$, $A$ a central simple $F$-algebra of degree $d \geq p$, and $V$ an $F$-vector subspace of $A$.
Let $f_1,\dots,f_d$ be the characteristic coefficients of $A$, as in the introduction.
Then for any $r \in [\![1,p-1]\!]$, 
we have $f_1(v)=\dots=f_r(v)=0$ for any $v\in V$ if and only if $\operatorname{Tr}(v)=\dots=\operatorname{Tr}(v^r)=0$ for any $v\in V$.
\end{prop}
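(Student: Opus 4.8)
The plan is to reduce the statement to the classical Newton--Girard identities between the elementary symmetric functions of the eigenvalues of $v$ and the power sums of those eigenvalues. Fix $v \in A$. Its reduced characteristic polynomial is $x^d + f_1(v)x^{d-1} + \dots + f_d(v) \in F[x]$; let $\lambda_1,\dots,\lambda_d$ be its roots in an algebraic closure $\overline{F}$, so that $f_k(v) = (-1)^k e_k(\lambda_1,\dots,\lambda_d)$ by definition of the characteristic coefficients. Passing to a splitting field, $v$ is represented by a $d\times d$ matrix $M$ whose characteristic polynomial is exactly this one, and $\operatorname{Tr}(v^k)$ is the ordinary trace of $M^k$, hence the power sum $p_k = \lambda_1^k + \dots + \lambda_d^k$; all of these quantities lie in $F$ even though the individual $\lambda_i$ need not. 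Since $d \ge p$ and $r \le p-1$, we have $r < d$, so $f_1,\dots,f_r$ are genuine characteristic coefficients and, for every $k \in [\![1,r]\!]$, we have $k < d$; hence the classical form of Newton's identity
$$p_k - e_1 p_{k-1} + e_2 p_{k-2} - \dots + (-1)^{k-1} e_{k-1} p_1 + (-1)^k k\,e_k = 0$$
holds for the eigenvalues of $v$.

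Solving this identity for $p_k$ expresses $\operatorname{Tr}(v^k)$ as a polynomial with integer coefficients in $f_1(v),\dots,f_k(v)$, so a straightforward induction on $k \le r$ shows that $f_1(v) = \dots = f_r(v) = 0$ forces $\operatorname{Tr}(v) = \dots = \operatorname{Tr}(v^r) = 0$; this implication uses nothing about the characteristic. Solving instead for $e_k$ gives
$$k\,e_k = (-1)^{k-1}\bigl(p_k - e_1 p_{k-1} + \dots + (-1)^{k-1} e_{k-1} p_1\bigr),$$
so $f_k(v)$ is a polynomial in $\operatorname{Tr}(v),\dots,\operatorname{Tr}(v^k)$ with coefficients in $\mathbb{Z}[\tfrac{1}{k!}]$; here the hypothesis that $\operatorname{char}(F)$ is $0$ or $\ge p$ is precisely what guarantees that $1,2,\dots,r$ (all strictly less than $p$) are invertible in $F$, so a second induction on $k \le r$ shows that $\operatorname{Tr}(v) = \dots = \operatorname{Tr}(v^r) = 0$ forces $f_1(v) = \dots = f_r(v) = 0$.

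Since both implications are established pointwise --- for each individual $v$ the vanishing of $f_1(v),\dots,f_r(v)$ is equivalent to the vanishing of $\operatorname{Tr}(v),\dots,\operatorname{Tr}(v^r)$ --- quantifying over all $v \in V$ changes nothing and the proposition follows. I do not foresee any genuine difficulty in this argument; the only points requiring care are the bookkeeping in the two inductions and the verification that no denominator beyond divisors of $r!$ (hence no prime $\ge p$) ever enters, which is exactly where the hypothesis on $\operatorname{char}(F)$ is used, while $d \ge p$ serves only to keep us in the range where $f_1,\dots,f_r$ are defined and the textbook form of Newton's identities applies.
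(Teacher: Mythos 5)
Your proof is correct and is essentially the argument the paper relies on: the paper's proof merely defers to \cite[Theorem 31]{MRSV}, whose underlying mechanism is exactly the Newton identities between the power sums $\operatorname{Tr}(v^k)$ and the characteristic coefficients $f_k(v)$, with the hypothesis $\operatorname{char}(F)=0$ or $\geq p$ making $1,\dots,r$ invertible so that both directions go through. Your write-up just makes explicit what the citation hides, and the pointwise equivalence you establish does immediately yield the quantified statement.
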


\begin{proof}
The argument follows the same lines as {\cite[Theorem 31]{MRSV}}.
The only reason why in the original theorem the statement includes only one direction is because of the issues with the characteristic, but under our assumptions, we avoid these issues.
\end{proof}
\begin{rem}
Write $V=F v_1+\dots+F v_\ell$.
Then the condition $\operatorname{Tr}(v^r)=0$ for all $v\in V$ boils down to $\operatorname{Tr}(v_1^{d_1} * \dots * v_\ell^{d_\ell})=0$ for any non-negative integers $d_1,\dots,d_\ell$ with $d_1+\dots+d_\ell = r$.
As a result, the condition $\operatorname{Tr}(v^r)=0$ for all $v\in V$ is invariant under scalar extension.\end{rem}

\section{Bounding the dimension}

Throughout this section, assume $p$ is a prime integer and $F_0$ is a field of characteristic either 0 or $\geq p$. If $\operatorname{char}(F_0) \neq p$, assume $F_0$ contains a primitive $p$th root of unity $\rho$.

\begin{thm}
Let $A=(\alpha_1,\beta_1)_{p,F} \otimes \dots \otimes (\alpha_m,\beta_m)_{p,F}$ if $\operatorname{char}(F_0)\neq p$, and $A=[\alpha_1,\beta_1)_{p,F} \otimes \dots \otimes [\alpha_m,\beta_m)_{p,F}$ if $\operatorname{char}(F_0)=p$ over $F=F_0(\alpha_1,\beta_1,\dots,\alpha_m,\beta_m)$ or $F_0(\!(\alpha_1^{-1})\!)(\!(\beta_1^{-1})\!)\dots(\!(\alpha_m^{-1})\!)(\!(\beta_m^{-1})\!)$.
Then every space $V$ in $A$ satisfying $\operatorname{Tr}(v^{p-1})=0$ for all $v\in V$ is of dimension at most $\frac{p^{2m}-1}{p-1}$. 
\end{thm}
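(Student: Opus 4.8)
The plan is to put a valuation on $A$, extract from it a large subset of $\mathbb{F}_p^{\times 2m}$, and feed it into the proposition of \Sref{LinesInFp} together with the explicit shape of the reduced trace on tensor products of symbol algebras, so as to produce a $v\in V$ with $\operatorname{Tr}(v^{p-1})\ne0$ whenever $\dim_F V$ is too large. Two reductions first. If $p=2$ the claim is trivial: $\operatorname{Tr}$ is a nonzero linear form and $\frac{p^{2m}-1}{p-1}=\dim_F A-1$; so assume $p\ge3$, whence $\operatorname{char}F\ne2$ and, when $\operatorname{char}F\ne p$, the characteristic is $0$ or exceeds $p$. And if $F=F_0(\alpha_1,\beta_1,\dots,\alpha_m,\beta_m)$, embed $F$ into $\widehat F=F_0(\!(\alpha_1^{-1})\!)(\!(\beta_1^{-1})\!)\cdots(\!(\alpha_m^{-1})\!)(\!(\beta_m^{-1})\!)$; then $A\otimes_F\widehat F$ is the corresponding symbol-algebra tensor product over $\widehat F$, $V\otimes_F\widehat F$ has the same dimension as $V$, and by the remark on trace conditions the hypothesis $\operatorname{Tr}(v^{p-1})=0$ passes to $V\otimes_F\widehat F$. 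So we may assume $F=\widehat F$.

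With $F$ the iterated Laurent series field it carries a valuation of value group $\mathbb{Z}^{2m}$ (lexicographically ordered) and residue field $F_0$, and this extends to a valuation $v$ on $A$ of value group $\frac1p\mathbb{Z}^{2m}$ and residue $F_0$, with $\operatorname{gr}(A)$ a graded division ring free over $\operatorname{gr}(F)$ of rank $p^{2m}$ on the images of the monomials $X^{\vec d}=x_1^{d_1}y_1^{e_1}\cdots x_m^{d_m}y_m^{e_m}$, $\vec d\in[\![0,p-1]\!]^{\times 2m}$ (the standard totally ramified valued division algebra attached to a generic symbol-algebra tensor product; one may quote the Tignol--Wadsworth theory or verify the valuation axioms directly from the structure constants, which for $(\alpha,\beta)_p$ are monomials in $\rho,\alpha,\beta$ and for $[\alpha,\beta)_p$ have leading part of that shape). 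For $a=\sum_{\vec d}c_{\vec d}(a)X^{\vec d}$ with $c_{\vec d}(a)\in F$, the values $v(c_{\vec d}(a)X^{\vec d})$ over the support lie in distinct cosets of $\mathbb{Z}^{2m}$, so $v(a)$ is attained at a unique index $\pi(a)\in[\![0,p-1]\!]^{\times 2m}\subseteq\mathbb{F}_p^{\times 2m}$ with $c_{\pi(a)}(a)\ne0$. Let $S=\pi(V\setminus\{0\})$; the coordinate projection $a\mapsto(c_{\vec d}(a))_{\vec d\in S}$ is injective on $V$, since a nonzero $a$ in its kernel would have $c_{\pi(a)}(a)=0$, so $\dim_F V\le|S|$.

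Suppose $\dim_F V>\frac{p^{2m}-1}{p-1}$, so $|S|>\frac{p^{2m}-1}{p-1}$. The reduced trace singles out one monomial: there are $\vec d_0\in[\![0,p-1]\!]^{\times 2m}$ and a nonzero $\epsilon\in F_0$ with $\operatorname{Tr}(X^{\vec d_0})=\epsilon$ and $\operatorname{Tr}(X^{\vec d})=0$ for $\vec d\ne\vec d_0$ ($\vec d_0=\vec0$, $\epsilon=p^m$ if $\operatorname{char}F\ne p$; $\vec d_0=(p-1,0,\dots,p-1,0)$, $\epsilon=(-1)^m$ if $\operatorname{char}F=p$), with $\epsilon\ne0$ by the reductions. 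Apply the proposition of \Sref{LinesInFp} with $n=2m$ and $Q=\vec d_0$ to get $\vec a,\vec b\in S$ and $k\in[\![0,p-1]\!]$ with $k\vec a+(p-1-k)\vec b=\vec d_0$. Pick $v_a,v_b\in V$ with $\pi(v_a)=\vec a$, $\pi(v_b)=\vec b$, and for an indeterminate $\lambda\in F$ set $u_\lambda=v_a+\lambda v_b$. Then $u_\lambda^{p-1}=\sum_{j=0}^{p-1}\lambda^j P_j$ where $P_j=\sum_w v_{w_1}\cdots v_{w_{p-1}}$, summed over words $w\in\{a,b\}^{p-1}$ with exactly $j$ letters $b$, so $\operatorname{Tr}(u_\lambda^{p-1})=\sum_j\lambda^j\operatorname{Tr}(P_j)$ is a polynomial in $\lambda$ over the infinite field $F$. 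It suffices to prove $\operatorname{Tr}(P_{p-1-k})\ne0$: then some $\lambda_0\in F$ gives $u_{\lambda_0}\in V$ with $\operatorname{Tr}(u_{\lambda_0}^{p-1})\ne0$, a contradiction.

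For $j=p-1-k$: writing $\xi_a,\xi_b\in\operatorname{gr}(A)$ for the leading terms of $v_a,v_b$, every summand $v_{w_1}\cdots v_{w_{p-1}}$ of $P_{p-1-k}$ has valuation $\Gamma:=k\,v(v_a)+(p-1-k)\,v(v_b)$ and leading term $\xi_{w_1}\cdots\xi_{w_{p-1}}$, a nonzero scalar $\theta_w$ times the distinguished generator of the one-dimensional $\operatorname{gr}(A)_\Gamma$, which sits in direction $k\vec a+(p-1-k)\vec b=\vec d_0$. Since generators of distinct symbol factors commute and $\overline{y_i}\,\overline{x_i}=\rho\,\overline{x_i}\,\overline{y_i}$ inside $(\alpha,\beta)_p$ (while $\operatorname{gr}(A)$ is commutative when $\operatorname{char}F=p$), one has $\xi_b\xi_a=\zeta\,\xi_a\xi_b$ for a $p$-th root of unity $\zeta$ (with $\zeta=1$ in characteristic $p$); sorting each word into $a^kb^{p-1-k}$ gives $\theta_w=\zeta^{\operatorname{inv}(w)}\theta_0$ (where $\operatorname{inv}(w)$ is the number of pairs of positions carrying $b$ before $a$ and $\theta_0\ne0$ is fixed), so the leading term of $P_{p-1-k}$ is $\theta_0\big(\sum_w\zeta^{\operatorname{inv}(w)}\big)=\theta_0\binom{p-1}{k}_\zeta$ times the generator. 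The crux is that the Gaussian binomial $\binom{p-1}{k}_\zeta$ is nonzero in $F$: for $\zeta=1$ it is $\binom{p-1}{k}$, whose prime factors are at most $p-1$, hence nonzero if $\operatorname{char}F$ is $0$ or exceeds $p$, while $\binom{p-1}{k}\equiv(-1)^k\pmod p$ if $\operatorname{char}F=p$; and for $\zeta$ a primitive $p$-th root of unity, $\binom{p-1}{k}_\zeta=\prod_{i=1}^k\frac{1-\zeta^{-i}}{1-\zeta^i}=(-1)^k\zeta^{-k(k+1)/2}\ne0$. Hence the leading term of $P_{p-1-k}$ is nonzero, $v(P_{p-1-k})=\Gamma<\infty$, $c_{\vec d_0}(P_{p-1-k})\ne0$, and $\operatorname{Tr}(P_{p-1-k})=\epsilon\,c_{\vec d_0}(P_{p-1-k})\ne0$, as needed. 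The main work, and the part most prone to slips, is the bookkeeping of leading terms of the noncommutative products $v_{w_1}\cdots v_{w_{p-1}}$ and the careful setup of the valuation on the generic tensor product; the nonvanishing of $\binom{p-1}{k}_\zeta$ is, pleasantly, uniform over the characteristics that occur.
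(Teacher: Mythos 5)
Your proof is correct and follows essentially the same route as the paper's: reduce to the iterated Laurent series field, use the valuation to extract a subset $S\subseteq\mathbb{F}_p^{\times 2m}$ of size at least $\dim V$, apply the line-covering proposition to hit the unique trace-supporting monomial, and contradict the vanishing of $\operatorname{Tr}(v_a^{k}*v_b^{p-1-k})$. The one place you go beyond the paper is welcome: where the paper simply asserts that the leading term of $v^k*w^{p-1-k}$ is a \emph{nonzero} scalar times $\overrightarrow{xy}^{P}$, you actually verify the non-cancellation of the $\binom{p-1}{k}$ summands via the Gaussian binomial $\binom{p-1}{k}_{\zeta}\neq 0$ (and the ordinary binomial in the commutative graded case), which is the genuinely delicate point of the argument.
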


\begin{proof}
Since the condition $\operatorname{Tr}(v^{p-1})=0$ for all $v\in V$ is invariant to scalar extensions, consider the case of iterated Laurent series $F=F_0(\!(\alpha_1^{-1})\!)(\!(\beta_1^{-1})\!)\dots(\!(\alpha_m^{-1})\!)(\!(\beta_m^{-1})\!)$.
In this case, $A$ is also a division ring of iterated twisted Laurent series, see \cite[Section 1.2.6]{TignolWadsworth:2015}.
The space $W$ of leading terms of elements in $V$ is of the same dimension as $V$, spanned by leading terms of values which are distinct modulo $\mathbb{Z}^{\times 2m}$ (see \cite[Remark 2.2]{ChapmanUre:2017} for a detailed explanation).
Set $x_1,y_1,\dots,x_m,y_m$ the generators of $A$.
Then, $W$ is generated by elements of the form $\overrightarrow{xy}^{\overrightarrow{de}}=x_1^{d_1}y_1^{e_1}\dots x_m^{d_m}y_m^{e_m}$ where $\overrightarrow{de}=(d_1,e_1,\dots,d_m,e_m)$ are distinct elements in $[\![0,p-1]\!]^{\times 2m}$.
Set $S$ to be the set of those $\overrightarrow{de}$.
Suppose the contrary that $\dim V=\dim W =|S|\geq \frac{p^{2m}-1}{p-1}+1$.
If $\operatorname{char}(F_0)\neq p$ take $P=\vec{0}$, and if $\operatorname{char}(F_0)=p$ take $P=(p-1,0,p-1,0,\dots,p-1,0)$. In either case, $P$ is equivalent modulo $p\mathbb{Z}^{\times 2m}$ to $k v+(p-1-k)w$ for some $v,w \in S$ and $k \in [\![0,p-1]\!]$.
Now, the trace of the leading term of $v^k * w^{p-1-k}$ is a nonzero scalar times the trace of $\vec{xy}^P$, which is nonzero. Therefore, $\operatorname{Tr}(v^k * w^{p-1-k}) \neq 0$, contradiction.
\end{proof}

\begin{rem}
Note that by this theorem, the maximal dimension of a subspace satisfying $\operatorname{Tr}(v^{p-1})=0$ is $p+1$ when $m=1$, which is a stronger result than the results of \cite{CGMRV} and \cite{ChapmanChapman:2017} where the maximal dimension of a ``Kummer space" (i.e., a space in which $f_1(v)=\dots=f_{p-1}(v)=0$) is $p+1$.
\end{rem}

\begin{cor}
The dimension of a subspace $V$ of $A$ satisfying $f_1(v)=\dots=f_{p-1}(v)=0$ for all $v\in V$ is bounded from above by $\frac{p^{2m}-1}{p-1}$.
\end{cor}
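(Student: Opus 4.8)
The plan is to deduce this immediately from the Theorem together with the Proposition of Section~4. The Proposition asserts that over a field of characteristic $0$ or $\geq p$, for a central simple algebra of degree $d\geq p$, and for any $r\in[\![1,p-1]\!]$, the vanishing of $f_1,\dots,f_r$ on a subspace $V$ is equivalent to the vanishing of $\operatorname{Tr}(v),\operatorname{Tr}(v^2),\dots,\operatorname{Tr}(v^r)$ on $V$.

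First I would check that the algebra $A$ of the Theorem meets these hypotheses: its degree is $d=p^m\geq p$, and its center $F$ is a rational function field or an iterated Laurent series field over $F_0$, hence has the same characteristic as $F_0$, namely $0$ or $\geq p$. So the Proposition applies with $r=p-1$: the hypothesis $f_1(v)=\dots=f_{p-1}(v)=0$ for all $v\in V$ is equivalent to $\operatorname{Tr}(v)=\operatorname{Tr}(v^2)=\dots=\operatorname{Tr}(v^{p-1})=0$ for all $v\in V$.

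Next I would extract from this, in particular, the single condition $\operatorname{Tr}(v^{p-1})=0$ for all $v\in V$, which is precisely the hypothesis of the Theorem. Applying the Theorem then yields $\dim V\leq\frac{p^{2m}-1}{p-1}$, as desired.

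There is essentially no obstacle here: the corollary is a formal consequence of the two earlier results, and the only point that warrants a line of care is confirming the characteristic and degree hypotheses of the Proposition are inherited by $A$ and its center $F$.
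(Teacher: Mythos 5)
Your proposal is correct and is exactly the argument the paper intends (the corollary is stated without proof as an immediate consequence): the Proposition of Section~4, applied with $r=p-1$, converts the vanishing of $f_1,\dots,f_{p-1}$ into the vanishing of $\operatorname{Tr}(v),\dots,\operatorname{Tr}(v^{p-1})$, and the single condition $\operatorname{Tr}(v^{p-1})=0$ already triggers the Theorem. Your check of the degree and characteristic hypotheses is the right (and only) point of care.
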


It is natural to ask if this bound is sharp, and indeed, we can produce an example:
\begin{exmpl}
Consider $V=Fx_1+F[x_1]y_1+F[x_1,y_1]x_2+F[x_1,y_1
,x_2]y_2+\dots+F[x_1,y_1,...,x_{m-1},y_{m-1}]x_m+
F[x_1,y_1,...,x_{m-1},y_{m-1},x_m]y_m$ when $\operatorname{char}(F_0)\neq p$. This space satisfies $\operatorname{Tr}(v)=\dots=\operatorname{Tr}(v^{p-1})=0$ for all $v\in V$, and its dimension is indeed $1+p+\dots+p^{2m-1}=\frac{p^{2m}-1}{p-1}$.
\end{exmpl}

\begin{proof}
For convenience, write $z_1,z_2,\dots,z_{2m-1},z_{2m}$ for $x_1,y_1,\dots,x_m,y_m$. Then we prove by induction that $F[z_1]+F[z_1]z_2+\dots+F[z_1,\dots,z_{k-1}]z_k$ satisfies the statement for each $k \in \{1,\dots,2m\}$. We do it by induction on $k$. Recall that $\tr(z_1^{d_1} z_2^{d_2}\dots z_k^{d_k})\neq 0$ if and only if $d_1 \equiv d_2 \equiv \dots \equiv d_k \equiv 0 \pmod{p}$. For $k=1$, $Fz_1$ is a space in which every element $v=cz_1$ clearly satisfies $\tr(v^i)=\tr(c^iz_1^i)=0$ for each $i\in [\![1,p-1]\!]$. Suppose the statement holds true for $k-1$. Then consider $v\in F[z_1]+F[z_1]z_2+\dots+F[z_1,\dots,z_{k-2}]z_{k-1}+F[z_1,\dots,z_{k-1}]z_k$. It decomposes as $u+w$ where $u\in F[z_1]+F[z_1]z_2+\dots+F[z_1,\dots,z_{k-2}]z_{k-1}$ and $w\in F[z_1,\dots,z_{k-1}]z_k$. For each $i \in [\![1,p-1]\!]$, $\tr((u+w)^i)=\sum_{d=0}^i\tr(u^d * w^{i-d})$. The term $u^i$ has trace zero by the induction hypothesis. In each summand of $u^d * w^{i-d}$ with $d\neq 0$, the power of $z_k$ is $i-d$ which is an integer in $[\![1,p-1]\!]$, and thus $\tr(u^d * w^{i-d})=0$. 
\end{proof}

Another natural question is to ask whether there exists a space $V$ satisfying $\operatorname{Tr}(v^{p-1})=0$ for all $v \in V$, but not $f_1(v)=\dots=f_{p-1}(v)=0$. The answer is again positive:
\begin{exmpl}
Suppose $\operatorname{char}(F_0)=3$ and that $F_0$ contains a square root $\omega$ for $-1$.
Take $m=1$, i.e., $A=[\alpha,\beta)_{3,F}$ generated by $x$ and $y$ over $F=F_0(\!(\alpha^{-1})\!)(\!(\beta^{-1})\!)$.
Consider the space $V=F[x]y+F(x^2-\omega x)$.
Its basis is $y,xy,x^2y,x^2-\omega x$.
Clearly $\operatorname{Tr}(v^2)=\operatorname{Tr}(v*w)=0$ whenever $v \in \{y,xy,x^2y\}$ and $w \in \{y,xy,x^2y,x^2-\omega x\} \setminus \{v\}$.
It is left to compute $\operatorname{Tr}((x^2-\omega x)^2)$, which is
$\operatorname{Tr}(x^4-2\omega x^3-x^2)=\operatorname{Tr}((x+\alpha)x-2\omega (x+\alpha)-x^2)=\operatorname{Tr}((\alpha-2\omega)x-2\omega \alpha)=0$.
However, $\operatorname{Tr}(x^2-\omega x)=-1 \neq 0$.
\end{exmpl}

This example was possible because here $F \not \subseteq V$. When $F \subseteq V$, this is impossible:

\begin{prop}
If $\operatorname{char}(F)=p$, $A=[\alpha_1,\beta_1)_{p,F} \otimes \dots \otimes [\alpha_m,\beta_m)_{p,F}$ and $V$ is an $F$-subspace of $A$ satisfying $\operatorname{Tr}(v^{p-1})=0$ for all $v \in V$ containing $F$ as a subspace,  then $f_1(v)=\dots=f_{p-1}(v)=0$ for all $v \in V$.
\end{prop}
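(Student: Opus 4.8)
The plan is to reduce the statement to the trace characterization of vanishing characteristic coefficients established earlier, which says that (for $\operatorname{char}(F)=0$ or $\geq p$ and $\deg A\geq p$, both of which hold here since $\deg A=p^m$) one has $f_1(v)=\dots=f_{p-1}(v)=0$ for all $v\in V$ if and only if $\operatorname{Tr}(v)=\dots=\operatorname{Tr}(v^{p-1})=0$ for all $v\in V$. Thus it suffices to prove $\operatorname{Tr}(v^r)=0$ for every $v\in V$ and every $r\in[\![1,p-1]\!]$. We already know this for $r=p-1$; the extra ingredient not yet used is the hypothesis $F\subseteq V$, and the idea is to exploit it by perturbing an arbitrary element of $V$ by central scalars.

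Concretely, fix $v\in V$. Since $F\subseteq V$, the element $v+c$ lies in $V$ for every $c\in F$, so $\operatorname{Tr}((v+c)^{p-1})=0$. Because $c$ is central it commutes with $v$, so expanding by the binomial theorem and using linearity of the trace gives
\[
\sum_{r=0}^{p-1}\binom{p-1}{r}\,c^{\,p-1-r}\,\operatorname{Tr}(v^r)=0\qquad\text{for every }c\in F.
\]
The key observation --- and the only point at which the hypothesis $\operatorname{char}(F)=p$ enters --- is that the $r=0$ term equals $c^{p-1}\operatorname{Tr}(1)=c^{p-1}p^m=0$. This is exactly the feature that is absent in the earlier $p=3$ example, where $F\not\subseteq V$ and $\operatorname{Tr}(1)$ never comes into play.

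Discarding the $r=0$ term, one is left with $g(c):=\sum_{r=1}^{p-1}\binom{p-1}{r}c^{\,p-1-r}\operatorname{Tr}(v^r)=0$ for every $c\in F$, where $g$ is a polynomial of degree at most $p-2$ with coefficients in $F$. Since a field of characteristic $p$ has at least $p>p-2$ elements (alternatively, one may first extend scalars to an infinite field, the trace condition being invariant under scalar extension by the Remark in Section~4), $g$ must be the zero polynomial, so $\binom{p-1}{r}\operatorname{Tr}(v^r)=0$ for each $r$. Finally $\binom{p-1}{r}\equiv(-1)^r\not\equiv 0\pmod p$, so $\binom{p-1}{r}$ is invertible in $F$ and hence $\operatorname{Tr}(v^r)=0$. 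As $v$ was arbitrary, this yields $\operatorname{Tr}(v)=\dots=\operatorname{Tr}(v^{p-1})=0$ on $V$, and the cited equivalence finishes the proof. I do not expect a genuine obstacle here: the only things requiring care are pinpointing where $\operatorname{char}(F)=p$ is used (the vanishing of $\operatorname{Tr}(1)=p^m$) and recording that each $\binom{p-1}{r}$ is a unit modulo $p$.
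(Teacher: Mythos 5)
Your proof is correct and rests on the same key mechanism as the paper's: the hypothesis $F\subseteq V$ lets you substitute the central element $1$ into the degree-$(p-1)$ trace condition, and since $\binom{p-1}{r}\equiv(-1)^r\pmod p$ is a unit, the lower traces $\operatorname{Tr}(v^r)$ are forced to vanish, after which the equivalence of Section~4 applies. The only cosmetic difference is that you expand $(v+c)^{p-1}$ as a polynomial in a central scalar $c$ and compare coefficients, whereas the paper multilinearizes over a basis $v_1=1,v_2,\dots,v_k$ using the $*$-products.
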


\begin{proof}
Set $v_1,v_2,\dots,v_k$ to be a basis of $V$.
The condition $\operatorname{Tr}(v^{p-1})=0$ is equivalent to $\operatorname{Tr}(v_1^{d_1} *v_2^{d_2} *\dots * v_k^{d_k})=0$ for any non-negative $d_1,\dots,d_k$ with $d_1+\dots+d_k=p-1$.
We can assume $v_1=1$ because $F$ is a subspace of $V$.
Then $\operatorname{Tr}(v_1^{d_1} *v_2^{d_2} *\dots * v_k^{d_k})=\binom{p-1}{d_1}\operatorname{Tr}(v_2^{d_2} *\dots * v_k^{d_k})$. Therefore, $\operatorname{Tr}(v_2^{d_2} *\dots * v_k^{d_k})=0$ for any non-negative $d_2,\dots,d_k$ with $d_2+\dots+d_k=\ell \leq p-1$. Now, for any $r\in \{0,1,\dots,p-1-\ell\}$, $\operatorname{Tr}(v_1^{r} *v_2^{d_2} *\dots * v_k^{d_k})=\binom{r+\ell}{r}\operatorname{Tr}(v_2^{d_2} *\dots * v_k^{d_k})=0$, and so $\operatorname{Tr}(v^{t})=0$ for any $v\in V$ and any $t \in [\![1,p-1]\!]$. Therefore, $f_1(v)=\dots=f_{p-1}(v)=0$ for all $v \in V$.
\end{proof}

\section{The trace of the square and open questions}

In this section we want to say a few words on the condition $\operatorname{Tr}(v^2)=0$ over general central simple algebras of degree $p^m$.

\begin{prop}
Given an odd prime $p$, a field $F$ of $\operatorname{char}(F)=0$ or $\geq p$ and a central simple algebra $A$ of degree $p^m$ over $F$, the form $\varphi : A \rightarrow F$ defined by $\varphi(v)=\operatorname{Tr}(v^2)$ is a non-singular quadratic form, and thus the maximal dimension of a subspace $V$ of $A$ satisfying $\operatorname{Tr}(v^2)=0$ for all $v \in V$ is at most $\frac{p^{2m}-1}{2}$.
\end{prop}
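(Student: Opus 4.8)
The plan is to show that $\varphi(v)=\operatorname{Tr}(v^2)$ is a non-singular quadratic form on $A$ viewed as an $F$-vector space of dimension $p^{2m}$, and then invoke the standard fact that a non-singular quadratic form in $N$ variables cannot vanish identically on a subspace of dimension greater than $N/2$ (the maximal totally isotropic subspaces have dimension at most $N/2$). Here $N=p^{2m}$, so the bound becomes $\frac{p^{2m}-1}{2}$ once one notes that $p^{2m}$ is odd (as $p$ is odd), hence a totally isotropic subspace, having integer dimension $\leq p^{2m}/2$, has dimension $\leq \frac{p^{2m}-1}{2}$.

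So the real content is non-singularity. First I would recall that the associated symmetric bilinear form is $b(v,w)=\varphi(v+w)-\varphi(v)-\varphi(w)=\operatorname{Tr}(vw+wv)=2\operatorname{Tr}(vw)$, using $\operatorname{Tr}(vw)=\operatorname{Tr}(wv)$. Since $\operatorname{char}(F)=0$ or $\geq p > 2$, the factor $2$ is invertible, so $b$ is non-singular if and only if the trace bilinear form $(v,w)\mapsto\operatorname{Tr}(vw)$ is non-singular. This is a classical fact about central simple algebras in characteristic zero or characteristic coprime to the degree (equivalently, $A$ is a separable $F$-algebra): the reduced trace pairing is non-degenerate. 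One clean way to see it is by scalar extension to a splitting field $K$, where $A\otimes_F K\cong M_{p^m}(K)$ and the reduced trace becomes $\frac{1}{p^m}$ times the matrix trace; the pairing $(X,Y)\mapsto\operatorname{tr}(XY)$ on $M_n(K)$ is visibly non-degenerate (the matrix units $e_{ij}$ pair with $e_{ji}$), and $p^m$ is invertible in $F$ under our characteristic hypothesis. Non-degeneracy is insensitive to scalar extension, so $(v,w)\mapsto\operatorname{Tr}(vw)$ is non-degenerate over $F$.

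Finally I would assemble the pieces: $b$ is non-singular, hence $\varphi$ is a non-singular quadratic form in $p^{2m}$ variables over $F$; its totally isotropic subspaces have dimension at most $\lfloor p^{2m}/2\rfloor=\frac{p^{2m}-1}{2}$; and a subspace $V$ with $\operatorname{Tr}(v^2)=0$ for all $v\in V$ is exactly such a totally isotropic subspace, giving $\dim V\leq\frac{p^{2m}-1}{2}$.

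The main obstacle, such as it is, is making sure the characteristic hypothesis is used correctly in two places at once: once to pass from $\varphi$ to the trace pairing via the invertibility of $2$, and once to guarantee the reduced trace pairing itself is non-degenerate (which needs $p^m$, hence $p$, to be invertible — precisely the hypothesis $\operatorname{char}(F)=0$ or $\geq p$). If $\operatorname{char}(F)$ divided the degree, the reduced trace form would be degenerate and the argument collapses; it is worth a sentence noting this is why the hypothesis is stated as it is. Beyond that bookkeeping, the argument is routine.
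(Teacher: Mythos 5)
Your proof is correct and follows essentially the same route as the paper: both pass to a splitting field, identify $A\otimes_F K$ with $M_{p^m}(K)$, and verify non-singularity there --- the paper by exhibiting an explicit orthogonal basis diagonalizing $\varphi_K$ as $\langle 1,\dots,1\rangle$, you by polarizing to $b(v,w)=2\operatorname{Tr}(vw)$ and pairing the matrix units $e_{ij}$ with $e_{ji}$. One factual slip is worth correcting: under a splitting $A\otimes_F K\cong M_{p^m}(K)$ the reduced trace \emph{is} the matrix trace, not $\frac{1}{p^m}$ times it (you are thinking of the trace of the left regular representation, which equals $p^m$ times the reduced trace); consequently the reduced trace pairing is non-degenerate in every characteristic, even when $\operatorname{char}(F)$ divides the degree, so your closing remark that the argument would collapse in that case is false --- though none of this affects the validity of your proof under the stated hypotheses. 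The place where the characteristic hypothesis is genuinely used is to guarantee $\operatorname{char}(F)\neq 2$ (automatic since $p$ is odd and $\operatorname{char}(F)=0$ or $\geq p$), which you need both to pass between $\varphi$ and its polar form and to apply the bound $\dim V\leq\lfloor p^{2m}/2\rfloor$ on totally isotropic subspaces.
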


\begin{proof}
Take $K$ to be an algebraic closure of $F$, and consider $A \otimes K \cong M_{p^m}(K)$. The form $\varphi$ restricts accordingly to $\varphi_K : A\otimes K \rightarrow K$.
Singularity is invariant under scalar extension, and thus it is enough to explain why $\varphi_K$ is non-singular.
Now, the form $\varphi_K$ is isometric to $\langle 1,1,\dots,1 \rangle=p^m \times \langle 1 \rangle$ by choosing the following basis:
the $p^m$ diagonal matrices $E_{1,1},\dots,E_{p^m,p^m}$ and the $p^{2m}-p^m$ matrices of the forms $\frac{1}{\sqrt{2}}(E_{i,j}+E_{j,i})$ and $\frac{1}{\sqrt{2}}(\sqrt{-1}E_{i,j}-\sqrt{-1}E_{j,i})$ with $i\neq j$, where $E_{i,j}$ stands for the matrix unit with 1 in the $(i,j)$th slot and 0 elsewhere. It is therefore nonsingular.
\end{proof}

One can ask in this direction whether for any degree $p^m$ central simple $F$-algebra $A$, the maximal dimension of an $F$-subspace $V$ of $A$ over which $\operatorname{Tr}(v^r)=0$ for a given $r \in [\![2,p-1]\!]$ is $\lfloor \frac{p^{2m}-1}{r} \rfloor$.
The answer in this full generality is negative, for if we take $A$ to be $M_{p^m}(F)$ and $V$ to be the space of upper triangular matrices with zeros on the diagonal, then this satisfies $\operatorname{Tr}(v^r)=0$ for any $v \in V$ and $r\in [\![1,p-1]\!]$ and $\dim V=(p^m-1)+(p^m-2)+\dots+2+1=\frac{p^m\cdot (p^m-1)}{2}=\frac{p^{2m}-p^m}{2}$, which for large $p$ is much greater than $\lfloor \frac{p^{2m}-1}{r} \rfloor$ for $r\geq 3$.
In the generic tensor product of $m$ symbol algebras of degree $p$ with $r=p-1$ it holds true, so at the very least we can ask whether this holds for division algebras:
\begin{ques}
Is $\lfloor \frac{p^{2m}-1}{r} \rfloor$ an upper bound for the dimension of subspaces $V$ of any division algebra $A$ of degree $p^m$ where $\operatorname{Tr}(v^r)=0$ for all $v\in V$ given $r\in [\![2,p-1]\!]$? Is it true at least when $A$ is the generic tensor product of $m$ symbol algebras of degree $p$?
\end{ques}

\section*{Acknowledgments}

The author thanks Terence Tao and Gjergji Zaimi for their answers to my question on MathOverflow (https://mathoverflow.net/questions/453198/subsets-of-mathbbz-p-times-n).
The author also thanks Ian Le, Ryan Alweiss and Sammy Luo for the conversations that helped formulating the combinatorial problem.
Special thanks go to the two anonymous referees for the detailed and helpful reports.
\bibliographystyle{amsalpha}
\newcommand{\etalchar}[1]{$^{#1}$}
\def\cprime{$'$} \def\cprime{$'$}
\providecommand{\bysame}{\leavevmode\hbox to3em{\hrulefill}\thinspace}
\providecommand{\MR}{\relax\ifhmode\unskip\space\fi MR }
\providecommand{\MRhref}[2]{%
  \href{http://www.ams.org/mathscinet-getitem?mr=#1}{#2}
}
\providecommand{\href}[2]{#2}

\end{document}